\newcommand{\be}{\begin{equation}}
\newcommand{\ee}{\end{equation}}
\newcommand{\beano}{\begin{eqn*}} 
	\newcommand{\eeano}{\end{eqnarray*}}
\newcommand{\ba}{\begin{array}}
	\newcommand{\ea}{\end{array}}
\declaretheoremstyle[headfont=\normalfont]{normalhead}
\newtheorem{theorem}{Theorem}[section]
\newtheorem{lemma}[theorem]{Lemma}
\newtheorem{example}[theorem]{Example}
\numberwithin{equation}{section}
\begin{document}
\title{Decomposition of quandle rings of dihedral quandles}
\author{Dilpreet Kaur}
\email{dilpreetkaur@iitj.ac.in}
\address{Indian Institute of Technology Jodhpur}

\author{Pushpendra Singh}
\email{singh.105@iitj.ac.in}
\address{Indian Institute of Technology Jodhpur}

\thanks{We are thankful to Amit Kulshrestha for his useful comments on our work.}
\subjclass[2010]{20C15, 57M27, 17D99, 16S34}
\keywords{Quandle rings, Dihedral quandles, Dihedral groups, Representations and characters}
\maketitle

\begin{abstract}
Let $K = \mathbb{R}$ or $\mathbb{C}$ and $\mathcal{R}_n$ be the dihedral quandle of order $n.$ In this article, we give decomposition of the quandle ring $K[\mathcal{R}_n]$ into indecomposable right $K[\mathcal{R}_n]$-modules for all even $n \in \mathbb N$. It follows that the decomposition of $K[\mathcal{R}_n]$ given in \cite[Prop. 4.18(2)]{EFT} is valid only in the case when $n$ is not divisible by $4$.
\end{abstract}

\section{Introduction}\label{1}
A set $X$ along with a binary operation $\triangleright$ is called a {\it quandle} if it satisfies the following three axioms:
\begin{enumerate}[(1).]
 \item $x \triangleright x = x$ for all $x \in X.$
 \item For any pair $x,y\in X,$ there exist a unique $z\in X$ such that $x=z \triangleright y.$
 \item $(x \triangleright y) \triangleright z = (x \triangleright z)\triangleright (y \triangleright z)$ for all $x,y,z \in X.$
\end{enumerate}
The above three axioms are motivated by the three Reidemeister moves of diagrams of knots. 

In \cite{BPS19}, authors introduced the notion of quandle rings, which is analogous to the notion of group rings. Let $K$ be an associative ring and $(X, \triangleright)$ be a quandle. The quandle ring $K[X]$ as a set is same as  $\{ \sum_{i} \alpha_ix_i ~|~ \alpha_i\in K, x_i\in X\}.$ It forms an abelian group under pointwise addition. The following multiplication equips it with the structure of a ring.

$$\left(\sum_i \alpha_ix_i\right).\left(\sum_j \beta_jx_j\right) = \sum_{i,j} \alpha_i\beta_j (x_i \triangleright x_j).$$

Let $n\in \mathbb{N}$ and $\mathcal{R}_n=\{0,1,2,\dots, n-1\}.$ We define $i\triangleright j = 2j-i({\rm mod}(n)).$ The set $\mathcal{R}_n$ along with the binary operation $\triangleright$ forms a quandle, and it is known as {\it dihedral quandle.} 

In this article, we give the decomposition of $K[\mathcal{R}_n]$ into indecomposable right $K[\mathcal{R}_n]$-modules, where $\mathcal{R}_n$ denotes the dihedral quandle of even order $n.$ In \cite[Prop. 4.18]{EFT}, authors give decomposition of $K[\mathcal{R}_n]$ into indecomposable right $K[\mathcal{R}_n]$-modules. However, there is a defect in their decomposition of $K[\mathcal{R}_n]$ in the case when $n$ is divisible by $4$.

We conclude this article with a detailed discussion on decomposition of quandle ring $K[\mathcal{R}_8]$ of dihedral quandle of order $8$.
  
\section{Decomposition of quandle rings of dihedral quandles of even order}
Let $(X, \triangleright)$ be a quandle.
From the definition of quandle, we get that right multiplication maps $R_x : X \to X$ defined by $R_x(y)= y \triangleright x$;  for $x\in X$, are automorphisms of $X$. The group generated by $\{ R_x ~|~ x\in X\}$ is called group of inner automorphisms of $X$ and it is denoted by ${\rm Inn}(X).$ The group ${\rm Inn}(X)$ acts on the set $X$ via natural action. The quandle $X$ is called {\it connected} if the action ${\rm Inn}(X)$ on $X$ is transitive.   

\begin{example}\label{example}
\normalfont
\begin{enumerate}[(1).]
 \item The dihedral quandle $\mathcal{R}_n$ defined in \S\ref{1} is connected when $n$ is an odd positive integer (see \cite[Example 4.17]{EFT}).
 \item Let $n=2m$ and $\mathcal{R}_n$ be the dihedral quandle of order $n.$  The action of ${\rm Inn}(\mathcal{R}_n)$ on $\mathcal{R}_n$ is not transitive in this case. The orbits of action of ${\rm Inn}(\mathcal{R}_n)$ on $\mathcal{R}_n$ are $A_n:= \{0,2,\dots, n-2\}$ and $B_n:= \{1, 3, \dots, n-1\}.$  
\end{enumerate}
\end{example}

In \cite{EFT}, authors discussed the decomposition of $K[X]$ into indecomposable right $K[X]$-modules. We define $\rho : {\rm Inn}(X) \to {\rm GL}(K[X])$ by $\rho(R_x)(y) = y\triangleright x$ for all $R_x\in {\rm Inn}(X)$ and $y\in X.$ The map $\rho$ is a representation of ${\rm Inn}(X)$ and decomposing $K[X]$ into indecomposable right $K[X]$-modules is equivalent to decomposing $\rho$ into irreducible representations of the group ${\rm Inn}(X).$

Let $X$ be a quandle of order $n$ and $\mathcal S_n$ be the symmetric group of degree $n$. Suppose $X$ is not a connected quandle and $X=X_1\cup X_2 \cup \dots X_k,$ where $X_i$ is an orbit for action of ${\rm Inn}(X)$ on $X$. We recall from \cite{EFT} that the map $\Phi : X\to \mathcal S_n$ defined by $\Phi(x)=R_x$ restricts to the maps $\Phi_i : X \to \mathcal S_{n_i},$ where $n_i=|X_i|$ for all $1\leq i\leq k.$ In this case, $K[X]=\oplus_{i=1}^{k} K[X_i]$ and decomposition of $K[X_i]$ into indecomposable right $K[X_i]$-modules is equivalent to decomposing corresponding irreducible representations of the group ${\rm Inn}(X_i).$

From now onwards, $n=2m$ for $m\in \mathbb{N}.$
\begin{lemma}\label{dihedral}
 With the notations same as defined in Example \ref{example}(2), the groups ${\rm Inn}(A_n)$ and ${\rm Inn}(B_n)$ are isomorphic to the dihedral group of order $n.$ 
\end{lemma}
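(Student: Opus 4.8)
The plan is to compute ${\rm Inn}(A_n)$ concretely as a permutation group of the $m$-element set $A_n$ and to recognize it as the dihedral group of order $2m = n$. The first point to pin down is the meaning of ${\rm Inn}(A_n)$ in the present setting: it is the image of the restriction map $\Phi_A \colon \mathcal R_n \to \mathcal S_m$, $x \mapsto R_x|_{A_n}$, that is, the subgroup of $\mathcal S_m$ generated by $\{R_x|_{A_n} : x \in \mathcal R_n\}$. I stress that $x$ ranges over the whole quandle $\mathcal R_n$, not merely over $A_n$; this is exactly where the subtlety lies. To make the computation transparent I would identify $A_n = \{0,2,\dots,2m-2\}$ with $\mathbb Z_m$ through $2a \leftrightarrow a$, and likewise $B_n = \{1,3,\dots,2m-1\}$ with $\mathbb Z_m$ through $2a+1 \leftrightarrow a$.

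Under the identification $2a \leftrightarrow a$, a direct calculation rewrites each restricted map as a reflection of $\mathbb Z_m$: writing $x = 2b$ one gets $R_{2b}(2a) = 2(2b - a)$, i.e.\ $a \mapsto -a + 2b \pmod m$, while writing $x = 2b+1$ one gets $R_{2b+1}(2a) = 2(2b+1-a)$, i.e.\ $a \mapsto -a + (2b+1) \pmod m$. Thus every $R_x|_{A_n}$ has the form $s_c \colon a \mapsto -a + c \pmod m$. The crucial bookkeeping step is to track which constants $c$ occur: the even elements $x = 2b$ produce precisely the values $c = 2b \bmod m$, and the odd elements $x = 2b+1$ produce the values $c = 2b+1 \bmod m$; taken together these exhaust all residues $c \in \mathbb Z_m$. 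This is the step I expect to be the main obstacle, and it is the heart of the matter: had one used only $x \in A_n$ (the even elements), the constants $c$ would lie in $\langle 2 \rangle \le \mathbb Z_m$, which is a proper subgroup exactly when $m$ is even (that is, when $4 \mid n$), and the generated group would be too small.

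Finally I would assemble the group generated by the full set of reflections $\{s_c : c \in \mathbb Z_m\}$. Since $s_{c'} \circ s_c$ sends $a \mapsto a + (c' - c)$, the composites of pairs of reflections realize all $m$ translations of $\mathbb Z_m$, and together with the $m$ reflections this yields a group of order $2m$. Taking $r = s_1 s_0$ (a translation of order $m$) and $s = s_0$ (an involution), one checks $s r s^{-1} = r^{-1}$, so the group satisfies the dihedral presentation and is the dihedral group of order $2m = n$. The same computation carried out with the identification $2a+1 \leftrightarrow a$ shows that the restrictions $R_x|_{B_n}$ are again exactly the reflections $s_c$ for all $c \in \mathbb Z_m$, whence ${\rm Inn}(B_n)$ is likewise dihedral of order $n$, completing the proof.
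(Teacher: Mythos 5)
Your proof is correct and takes essentially the same approach as the paper: both arguments compute the restricted right-multiplication maps explicitly as permutations of the $m$-element orbit, observe that each is an involution (a reflection), that a suitable product such as $S_1S_0$ is an $m$-cycle (your translation $r=s_1s_0$ of order $m$), and conclude via the dihedral presentation $srs^{-1}=r^{-1}$. The differences are only bookkeeping: the paper first reduces to the two generators $R_0, R_1$ via the identity $R_j=(R_1R_0)^jR_0$ and works in cycle notation in $\mathcal{S}_n$, whereas you transport $A_n$ to $\mathbb{Z}_m$ and list all restrictions as affine reflections $a\mapsto -a+c$; your explicit warning that $x$ must range over all of $\mathcal{R}_n$ (not just $A_n$) is the same point the paper encodes by generating with both $R_0$ and $R_1$.
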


\begin{proof}
 We consider the right multiplication maps $R_j(i)=i\triangleright j =(2j-i)({\rm mod} n)$ for all $i, j \in \mathcal{R}_n.$ We observe that 
 $$\displaystyle R_j = \prod_{i=j+1}^{j+m-1}(i, 2j-i),$$
where $(i, 2j-i) \in \mathcal S_n$ is a transposition. Thus, $R_j=R_{j+m}=(R_1R_0)^jR_0$ for all $0\leq j\leq m-1$, and the group ${\rm Inn}(\mathcal{R}_n) $ of inner automorphisms of $\mathcal{R}_n$ is generated by $R_0$ and $R_1.$ 
 
 Let $S_0$ and $S_1$ be restrictions of $R_0$ and $R_1$ respectively, on the set $A_n$. It follows from the above discussion that the group ${\rm Inn}(A_n)$ is generated by $S_0$ and $S_1.$ Note that 
$S_0= (2,2m-2)(4,2m-4)\dots (m-2, m+2)$ and $S_1S_0=(0, 2, 4, \dots 2m-2).$ Therefore, ${\rm Inn}(A_n)$ is isomorphic to the dihedral group of order $n.$ A similar calculation holds for the group ${\rm Inn}(B_n)$.
\end{proof}

\remark\label{remark}
\normalfont 
We consider the following finite presentations for the dihedral group $D_m$ of order $2m$
$$D_m:= \langle r,s ~|~ r^m =s^2=1, srs^{-1}=r^{-1} \rangle.$$
Let $S_j$ and $T_j$ denote the restrictions of right multiplication map $R_j$ of quandle $\mathcal{R}_n$ on orbits $A_n$ and $B_n,$ respectively. The identification of $S_0$ with $s$ and 
of $S_1S_0$ with $r$ is an isomorphism between $D_m$ and ${\rm Inn}(A_n)$. Along the same lines, the identification of $T_0$ with $s$ and 
of $T_1T_0$ with $r$ is an isomorphism between $D_m$ and ${\rm Inn}(B_n)$. \hfill $\square$

Let $K=\mathbb{R}$ or $\mathbb{C}.$
By Example \ref{example}(2), $K[\mathcal{R}_n]=K[A_n]\oplus K[B_n].$
We know that $A_n$ is a basis of $K[A_n]$ and $B_n$ is that of
$K[B_n]$. We identify $A_n$ with $\{v_0, v_2, \dots , v_{n-2}\} \subseteq K[A_n]$ and $B_n$ with $\{v_1, v_3, \dots , v_{n-1}\} \subseteq K[B_n]$. Clearly, the modules generated by  $v_{triv, even}= \sum_{i=0}^{m-1} v_{2i}$ and $v_{triv, odd}= \sum_{i=0}^{m-1} v_{2i+1}$ are indecomposable modules over $K[A_n]$ and $K[B_n]$, respectively.
Moreover if $m$ is an even number, then the following lemma yields one dimensional indecomposable modules over $K[A_n]$ and $K[B_n].$
\begin{lemma}\label{ref}
 Let $m=2t, t\in \mathbb{N}.$ The modules generated by  $v_{ref, even}= \sum_{i=0}^{t-1} v_{4i}- \sum_{i=0}^{t-1}v_{4i+2}$ and $v_{ref, odd}= \sum_{i=0}^{t-1} v_{4i+1}-\sum_{i=0}^{t-1}v_{4i+3}$ are indecomposable modules over $K[A_n]$ and $K[B_n]$, respectively.
\end{lemma}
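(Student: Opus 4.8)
The plan is to prove that the right submodule generated by $v_{ref,even}$ is in fact the one-dimensional space $K\,v_{ref,even}$, from which indecomposability is immediate. The entire argument rests on a single computation: that $v_{ref,even}$ is fixed by right multiplication by every basis element of $K[A_n]$.

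First I would record how right multiplication acts on basis vectors. For $k$ even (so that $v_k\in A_n$) and $a$ even, one has $v_a\cdot v_k = v_{a\triangleright k}=v_{2k-a\pmod n}$; that is, right multiplication by $v_k$ realizes the involution $a\mapsto 2k-a$ on $A_n$, which by Lemma \ref{dihedral} is the reflection $S_k$. The crucial point is to track the residue of the index modulo $4$. Since $n=2m=4t$, reduction modulo $n$ does not disturb residues modulo $4$; and because $k$ is even, $2k\equiv 0\pmod 4$, so $2k-a\equiv -a\equiv a\pmod 4$ for every even $a$. Hence right multiplication by $v_k$ permutes $A_n$ while preserving both congruence classes $\{a\equiv 0\pmod 4\}$ and $\{a\equiv 2\pmod 4\}$.

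Since $v_{ref,even}$ carries coefficient $+1$ on the first class and $-1$ on the second, this invariance gives $v_{ref,even}\cdot v_k=v_{ref,even}$ for every even $k$. Extending linearly, $v_{ref,even}\cdot r=\big(\sum_k\alpha_k\big)\,v_{ref,even}$ for any $r=\sum_k\alpha_k v_k\in K[A_n]$, so the submodule generated by $v_{ref,even}$ equals $K\,v_{ref,even}$. A one-dimensional module is indecomposable, which proves the claim for $K[A_n]$. The argument for $v_{ref,odd}$ over $K[B_n]$ is identical after translating indices by $1$, using Remark \ref{remark} to identify $\mathrm{Inn}(B_n)$ with $D_m$.

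The step that genuinely uses the hypothesis $m=2t$ --- and hence where care is needed --- is the reduction modulo $4$: it is valid precisely because $4\mid n$. When $m$ is odd (so that $n\equiv 2\pmod 4$), reduction modulo $n$ scrambles residues modulo $4$, the vector $v_{ref,even}$ is no longer fixed, and the generated module is strictly larger. Thus the main obstacle is not the algebra but verifying this congruence bookkeeping carefully; in representation-theoretic terms it amounts to checking that $v_{ref,even}$ spans the one-dimensional representation of $D_m$ sending $r=S_1S_0\mapsto -1$ and $s=S_0\mapsto 1$, which exists exactly when $m$ is even.
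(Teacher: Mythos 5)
Your proposal is correct and takes essentially the same route as the paper: both verify by direct computation that right multiplication by each basis element of the relevant ring fixes $v_{ref, even}$ (resp.\ $v_{ref, odd}$), so the generated module is the one-dimensional span of the generator and hence indecomposable; your mod-$4$ bookkeeping is just a tidy way of organizing the paper's ``easy to see'' check. The only difference is that the paper also computes the action of the odd-indexed maps $S_j$ (which send $v_{ref, even}$ to $-v_{ref, even}$), i.e.\ the identification of the span as the character $r \mapsto -1$, $s \mapsto 1$ of $D_m$ that your closing remark mentions --- that sign computation is what gets used later in the main theorem, so it is worth writing out rather than leaving implicit.
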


\begin{proof}
We consider the right multiplication map $R_j(i)=i\triangleright j =(2j-i)({\rm mod} n)$ for all $i, j \in \mathcal{R}_n.$ Let $S_j$ be the restriction of $R_j$ on the set $A_n.$ It is easy to see that $S_j(v_{ref, even})=v_{ref, even}$ if $j$ is even and $S_j(v_{ref, even})=-v_{ref, even}$ if $j$ is odd.

Let $T_j$ be the restriction of $R_j$ on the set $B_n.$ It is easy to see that $T_j(v_{ref, odd})=-v_{ref, odd}$ if $j$ is even and $T_j(v_{ref, odd})=v_{ref, odd}$ if $j$ is odd. This completes the proof of lemma. 
\end{proof}

Let $S_j$ and $T_j$ denote the restrictions of right multiplication map $R_j$ of quandle $\mathcal{R}_n$ on orbits $A_n$ and $B_n,$ respectively. 
Consider the representation $\rho_{A_n}: {\rm Inn}(A_n)\to {\rm GL}(K[A_n])$ defined by $\rho(S_j)(v_i)=v_{i\triangleright j}$; for all $S_j\in {\rm Inn}(A_n).$ Let $\chi_{A_n}$ be the character associated to $\rho_{A_n}$. Similarly we define $\rho_{B_n}$ and $\chi_{B_n}.$
With these notations, we have the following lemma.
\begin{lemma}\label{Character}
 \begin{enumerate}[(1).]
  \item If $m$ is an odd number, then
  $$\chi_{A_n}(g)=\chi_{B_n}(g)= \begin{cases}
                  m \quad \quad \text{if } g=1\\
                  1 \quad \quad \text { if } g=S_{i} \text{ or } T_{i}, ~ 0\leq i\leq n-1\\
                  0 \text \quad \quad {otherwise}
                 \end{cases}$$
                 
\item If $m$ is an even number, then
$$\chi_{A_n}(g)= \begin{cases}
                  m \quad \quad \text{if } g=1\\
                  2 \quad \quad \text { if } g=S_{2i}, ~ 0\leq i\leq m-1\\
                  0 \quad \quad \text {  otherwise}
                 \end{cases}$$
and 
$$\chi_{B_n}(g)= \begin{cases}
                  m \quad \quad \text{if } g=1\\
                  2 \quad \quad \text { if } g=T_{2i+1}, ~ 0\leq i\leq m-1\\
                  0 \quad \quad \text {  otherwise}
                 \end{cases}$$
\end{enumerate}
\end{lemma}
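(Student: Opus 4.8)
The plan is to observe that $\rho_{A_n}$ and $\rho_{B_n}$ are \emph{permutation} representations: the group $\mathrm{Inn}(A_n)$ permutes the basis $\{v_i \mid i\in A_n\}$ by $v_i \mapsto v_{i\triangleright j}$, and likewise for $B_n$. Hence for any group element $g$ the value $\chi_{A_n}(g)$ (resp. $\chi_{B_n}(g)$) equals the number of basis vectors fixed by $g$, i.e. the number of fixed points of $g$ acting on the set $A_n$ (resp. $B_n$). The whole statement then reduces to a fixed-point count for each element of the group, which by Lemma \ref{dihedral} and Remark \ref{remark} is $D_m$, with rotations $(S_1S_0)^k$ and reflections $S_j=(S_1S_0)^jS_0$ for $0\leq j\leq m-1$ (and analogously $T_j$ on $B_n$).

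First I would dispose of the rotations. Since $S_0(i)=-i$, a direct computation gives $S_1S_0(i)=2-(-i)=i+2\ (\mathrm{mod}\ n)$, so $S_1S_0$ acts on $A_n=\{0,2,\dots,n-2\}$ as the cyclic shift $i\mapsto i+2$, an $m$-cycle. Consequently $(S_1S_0)^k(i)=i+2k$ has a fixed point iff $k\equiv 0\ (\mathrm{mod}\ m)$, so $(S_1S_0)^k$ is fixed-point-free for $1\leq k\leq m-1$ and is the identity for $k=0$. This accounts for the value $m$ at $g=1$ and the value $0$ at every nontrivial rotation. The same computation for $T_1T_0$ on $B_n=\{1,3,\dots,n-1\}$ gives the shift $i\mapsto i+2$ and handles the rotations of $\mathrm{Inn}(B_n)$ identically.

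The heart of the argument is the reflections. For $S_j$ acting on $A_n$ the equation $S_j(i)=i$ reads $2j-i\equiv i\ (\mathrm{mod}\ n)$, i.e. $2i\equiv 2j\ (\mathrm{mod}\ 2m)$, which is equivalent to $i\equiv j\ (\mathrm{mod}\ m)$. Writing $i=2k$ with $0\leq k\leq m-1$, this becomes $2k\equiv j\ (\mathrm{mod}\ m)$. If $m$ is odd then $2$ is invertible modulo $m$, so there is exactly one such $k$, giving $\chi_{A_n}(S_j)=1$ for every $j$; the identical analysis for $T_j(i)=i$ on $B_n$ (with $i=2k+1$) yields $\chi_{B_n}(T_j)=1$, proving part (1). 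If $m$ is even then $2k\ (\mathrm{mod}\ m)$ is always even, so $2k\equiv j\ (\mathrm{mod}\ m)$ is solvable iff $j$ is even, in which case it has exactly $\gcd(2,m)=2$ solutions $k$; thus $\chi_{A_n}(S_{2i})=2$ and $\chi_{A_n}(S_{2i+1})=0$. The parallel computation on $B_n$ turns the congruence into $2k\equiv j-1\ (\mathrm{mod}\ m)$, which forces $j$ odd for solvability and gives $\chi_{B_n}(T_{2i+1})=2$ and $\chi_{B_n}(T_{2i})=0$, proving part (2).

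I expect the only real care to be needed in the reflection count, specifically in the case split driven by the behaviour of $2$ modulo $m$: for odd $m$ the map $k\mapsto 2k$ is a bijection modulo $m$ (one fixed point per reflection), whereas for even $m$ its image is exactly the even residues, forcing the $0$-or-$2$ dichotomy and, with it, the splitting of the reflections of $D_m$ into the two conjugacy classes on which the character takes the values $2$ and $0$. A brief consistency check that the listed values are constant on conjugacy classes confirms that all $2m$ group elements have been accounted for.
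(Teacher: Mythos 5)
Your proof is correct and rests on the same idea as the paper's: the characters are permutation characters, so each value is a fixed-point count for the $D_m$-action (via Lemma \ref{dihedral} and Remark \ref{remark}), with rotations acting as fixed-point-free shifts and reflections analyzed through the congruence $i\equiv j \pmod{m}$. The only difference is organizational—you compute on every element explicitly with congruences, while the paper reduces to conjugacy-class representatives first—so the two arguments are essentially identical.
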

\begin{proof} 
We identify ${\rm Inn}(A_n)$ and ${\rm Inn}(B_n)$ with $D_m:=\{ r,s ~|~ r^m =s^2=1, srs^{-1}=r^{-1} \}$ (see Remark \ref{remark}). Since characters are class functions, it is sufficient to compute them on representatives of conjugacy classes of the group. 

For convenience, we divide rest of the proof into two cases.

\begin{enumerate}[(1).]
  \item {\bfseries Case $m$ = odd}.  In this case, $D_m$ has $(m+3)/2$ conjugacy classes, namely 
  $$\{1\}, \quad \{r^k, r^{-k}\}; 1 \leq k \leq (m-1)/2, \quad \{ r^ls; 0\leq l \leq m-1\}$$
 Using the proof of Lemma \ref{dihedral} and Remark \ref{remark}, 
 $$\chi_{A_n}(1)= \chi_{B_n}(1)=m, \chi_{A_n}(r^k)= \chi_{B_n}(r^k)=0$$ for all $1 \leq k \leq (m-1)/2$, and $\chi_{A_n}(s)= \chi_{B_n}(s)=1.$ \\

\item {\bfseries Case $m$ = even}. Let $m = 2t, t\in \mathbb{N}.$ In this case, $D_m$ has $t+3$ conjugacy classes, namely 
  $$\{1\}, \quad \{r^t\}, \quad \{r^k, r^{-k}\}; 1 \leq k \leq t-1,  \quad \{ r^ls;  l \text{ even}\}, \quad \{ r^ls;  l \text{ odd}\}$$
Again, from the proof of Lemma \ref{dihedral} and Remark \ref{remark}, 
$$\chi_{A_n}(1)= \chi_{B_n}(1)=m, \chi_{A_n}(r^k)= \chi_{B_n}(r^k)=0$$ for all $1 \leq k \leq t.$ We compute $\chi_{A_n}(s)= 2$ and  $\chi_{B_n}(s)=0$, whereas $\chi_{A_n}(rs)= 0$ and  $\chi_{B_n}(rs)=2.$ This completes proof of this lemma.
\end{enumerate}
\end{proof}

Let $\displaystyle \epsilon$ denote a primitive $n^{th}$ root of unity. Let $K= \mathbb{R}$ or $\mathbb{C}$ and $V_j$ be a two dimensional vector space over $K$ for each integer $j$ with $1\leq j \leq m-1.$ Let $D_m:=\langle r,s ~|~ r^m =s^2=1, srs^{-1}=r^{-1} \rangle$ be the dihedral group of order $2m.$ Then the map $\psi_j : D_m \to {\rm GL}(V_j)$ defined by 
$$ \psi_j(r)=\begin{pmatrix}
              \epsilon^j &  0\\
              0 &  \epsilon^{-j}
             \end{pmatrix}       \hspace{2cm}  \psi_j(s)=\begin{pmatrix}
              0 &  1\\
              1 &  0
             \end{pmatrix}  $$
             is a representation of $D_m.$ 
             If $m$ is an odd number, then the set $\{ \psi_j ~|~  1\leq j \leq (m-1)/2 \}$ is a complete set of inequivalent irreducible representations of $D_m$ with degree $2$. If $m$ is an even number, then the set $\{ \psi_j ~|~  1\leq j \leq \frac{m}{2}-1\}$ is complete set of inequivalent irreducible representations of $D_m$ with degree $2$. For more details, we refer the reader to \cite[\S 18.3]{JL}.

\begin{lemma}\label{InnerProduct}
 Let $k= (m-1)/2$ if $m$ is odd number and $k=\frac{m}{2}-1$ if $m$ is an even number. Then for each integer $j$ such that $1\leq j\leq k,$ the representation $\psi_{j}$ appears in the decomposition of $\rho_{A_n}$ as well as $\rho_{B_n}$ with multiplicity $1.$  
\end{lemma}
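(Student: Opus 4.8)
The plan is to obtain the multiplicity of $\psi_j$ in $\rho_{A_n}$ (and in $\rho_{B_n}$) as the inner product of characters. Over $K=\mathbb{R}$ or $\mathbb{C}$ the multiplicity of an irreducible constituent of a representation equals
\[
\langle \chi_{A_n}, \chi_{\psi_j}\rangle = \frac{1}{|D_m|}\sum_{g \in D_m} \chi_{A_n}(g)\,\overline{\chi_{\psi_j}(g)},
\]
and the two-dimensional $\psi_j$ are of real type, so the computation is insensitive to whether we work over $\mathbb{R}$ or $\mathbb{C}$. Since $\chi_{A_n}$ and $\chi_{B_n}$ are already tabulated in Lemma \ref{Character}, the only new ingredient needed is the character of $\psi_j$.

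First I would record $\chi_{\psi_j}$. Because $\psi_j(s)$, and hence every $\psi_j(r^l s)$, is an off-diagonal $2\times 2$ matrix, its trace is $0$; thus $\chi_{\psi_j}$ vanishes on every reflection of $D_m$. On the rotations one has $\chi_{\psi_j}(r^k)=\epsilon^{jk}+\epsilon^{-jk}$ and $\chi_{\psi_j}(1)=2$. In particular $\chi_{\psi_j}$ is real-valued, so the complex conjugation in the inner product is harmless.

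Then I would evaluate the inner product by summing over the conjugacy classes listed in the proof of Lemma \ref{Character}, weighting each by its size. The decisive observation is that the supports of $\chi_{A_n}$ and $\chi_{\psi_j}$ meet only at the identity: by Lemma \ref{Character}, $\chi_{A_n}$ is supported on $\{1\}$ together with the reflections $S_i$, whereas $\chi_{\psi_j}$ vanishes on those very reflections, and $\chi_{A_n}(r^k)=0$ on all nontrivial rotations. Hence every term except the identity term cancels, leaving
\[
\langle \chi_{A_n}, \chi_{\psi_j}\rangle = \frac{1}{2m}\,\chi_{A_n}(1)\,\chi_{\psi_j}(1) = \frac{1}{2m}\cdot m \cdot 2 = 1 .
\]
The same cancellation occurs for $\chi_{B_n}$, so $\psi_j$ appears in $\rho_{B_n}$ with multiplicity $1$ as well. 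I would run the odd-$m$ and even-$m$ cases of Lemma \ref{Character} in parallel, since both reduce to exactly this identity term.

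I do not expect a genuine obstacle here; the computation collapses almost immediately. The only points requiring care are confirming that $\chi_{\psi_j}$ really does vanish on \emph{all} reflections (so that the delicate reflection values of the even-$m$ case, namely $\chi_{A_n}(s)=2$ versus $\chi_{B_n}(s)=0$, are irrelevant to this particular multiplicity) and that the nontrivial rotation classes contribute nothing because $\chi_{A_n}(r^k)=\chi_{B_n}(r^k)=0$. As a consistency check I would verify the dimension count: the $k$ copies of the two-dimensional $\psi_j$ contribute $2k=m-1$ (for odd $m$) or $m-2$ (for even $m$), which together with the trivial one-dimensional constituent generated by $v_{triv,even}$ (and, for even $m$, the reflection one-dimensional constituent generated by $v_{ref,even}$ from Lemma \ref{ref}) recovers $\dim_K K[A_n]=m$, and analogously for $K[B_n]$.
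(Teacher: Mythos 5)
Your proposal is correct and takes essentially the same approach as the paper: the paper's proof likewise computes $\langle \chi_{A_n}, \chi_j \rangle = \frac{1}{2m}\sum_{g} \chi_{A_n}(g)\chi_j(g) = 1$ using Lemma \ref{Character} and the character table of $D_m$ from \cite[\S 18.3]{JL}, then invokes \cite[Th.~14.17]{JL}. You simply make explicit the details the paper delegates to the citation (the vanishing of $\chi_{\psi_j}$ on reflections, the collapse of the sum to the identity term, and the real-type remark justifying the case $K=\mathbb{R}$).
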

\begin{proof}
 Let $\chi_j$ denote the character associated to the irreducible representation $\psi_{j}$ of $D_m.$ We recall that $\chi_{A_n}$ is character associated to $\rho_{A_n}$. For $1\leq j\leq k,$ using lemma \ref{Character} and \cite[\S 18.3]{JL}, we compute
  $$ \langle \chi_{A_n}, \chi_j \rangle = \frac{1}{2m}\sum_{g\in {\rm Inn}(A_n)} \chi_{A_n}(g) \chi_j(g)
  = 1.$$
The inner product of $\chi_{B_n}$ and $\chi_j$ is also $1,$ where $\chi_{B_n}$ is the character associated to $\rho_{B_n}$. Now the lemma follows from \cite[Th. 14.17]{JL}.
\end{proof}

\begin{theorem}
 Let $K= \mathbb{C}$ or $\mathbb{R}.$ Let $\mathcal{R}_n$ be the dihedral quandle of order $n.$
 \begin{enumerate}
 \item If $m$ is odd, then $$K[\mathcal{R}_n]= V_{triv, even} \oplus V_{triv, odd} \oplus \bigoplus_{j=1}^{(m-1)/2} V_{j, even} \oplus \bigoplus_{j=1}^{(m-1)/2} V_{j, odd}.$$
 \item If $m=2t$, then $$K[\mathcal{R}_n]= V_{triv, even} \oplus V_{triv, odd} \oplus V_{ref, even} \oplus V_{ref, odd} \oplus \bigoplus_{j=1}^{t-1} V_{j, even} \oplus \bigoplus_{j=1}^{t-1} V_{j, odd}.$$ 
 \end{enumerate}

\end{theorem}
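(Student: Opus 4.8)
The plan is to treat each summand of the orthogonal splitting $K[\mathcal{R}_n] = K[A_n] \oplus K[B_n]$ (Example \ref{example}(2)) separately, using the equivalence recalled before Lemma \ref{dihedral}: decomposing $K[A_n]$ into indecomposable $K[A_n]$-modules amounts to decomposing the representation $\rho_{A_n}$ of ${\rm Inn}(A_n) \cong D_m$ into irreducibles, and likewise for $B_n$. Since $K = \mathbb{R}$ or $\mathbb{C}$ has characteristic zero, Maschke's theorem gives complete reducibility, so each indecomposable summand is irreducible; it thus suffices to read off the multiplicity of every irreducible constituent of $\rho_{A_n}$ and $\rho_{B_n}$. (For $K = \mathbb{R}$ one replaces the complex form of $\psi_j$ by its real rotation-reflection form, which is again irreducible with the same character, so the multiplicities are unaffected.)

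First I would handle the two-dimensional constituents: by Lemma \ref{InnerProduct}, each $\psi_j$ with $1 \leq j \leq k$ occurs in both $\rho_{A_n}$ and $\rho_{B_n}$ with multiplicity exactly one, where $k = (m-1)/2$ for $m$ odd and $k = m/2 - 1$ for $m$ even. These yield the summands $V_{j,even}$ and $V_{j,odd}$.

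Next I would account for the one-dimensional constituents by pairing $\chi_{A_n}$ and $\chi_{B_n}$ against the linear characters of $D_m$ using the values in Lemma \ref{Character}. A direct computation gives $\langle \chi_{A_n}, \chi_{triv} \rangle = 1$, with associated summand the explicit module $V_{triv,even} = \langle v_{triv,even}\rangle$, and similarly $V_{triv,odd}$ for $B_n$. When $m = 2t$, the group $D_m$ has three additional linear characters; computing the three inner products shows exactly one is nonzero (and equals $1$) for $\rho_{A_n}$, and a different one for $\rho_{B_n}$. These correspond to the sign-type modules $V_{ref,even} = \langle v_{ref,even}\rangle$ and $V_{ref,odd} = \langle v_{ref,odd}\rangle$ from Lemma \ref{ref}.

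Finally I would close by a dimension count: for $m$ odd the listed constituents contribute $1 + 2\cdot\tfrac{m-1}{2} = m = \dim_K K[A_n]$, and for $m$ even they contribute $1 + 1 + 2(t-1) = 2t = m$, with the identical tally for $B_n$. As these totals exhaust $\dim_K K[A_n]$ and $\dim_K K[B_n]$, no further constituents are possible and the decompositions are complete. I expect the only delicate point to be the even case: one must confirm that $\rho_{A_n}$ contains $V_{ref,even}$ rather than one of the other two nontrivial linear characters, and $\rho_{B_n}$ contains $V_{ref,odd}$. This is pinned down by combining the eigenvalue signs of $S_j$ and $T_j$ from Lemma \ref{ref} with the identifications $S_0 \leftrightarrow s$ and $S_1 S_0 \leftrightarrow r$ of Remark \ref{remark}, which determine exactly which linear character of $D_m$ each reflection module realizes.
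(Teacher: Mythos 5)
Your proposal is correct and follows essentially the same route as the paper: Lemma \ref{InnerProduct} for the two-dimensional constituents, the explicit one-dimensional modules $V_{triv}$ and (for $m$ even) $V_{ref}$ from Lemma \ref{ref}, and a final dimension count over each orbit summand $K[A_n]$ and $K[B_n]$. Your extra step of pairing $\chi_{A_n}$ and $\chi_{B_n}$ against the linear characters of $D_m$ (and the remark on realizability over $\mathbb{R}$) is a sound refinement but not needed beyond what the paper's dimension count already delivers.
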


\begin{proof}
 It follows from the Lemma \ref{InnerProduct}, that if $\psi_j : D_m \to {\rm GL}(V_j)$ is an irreducible representation of degree $2,$ then $V_j$ is an indecomposable $K[A_n]$-module with multiplicity $1.$ We denote this module by $V_{j, even}.$ Let us consider the following two cases.

 \begin{enumerate}
  \item {\bfseries $m$ = odd}. In this case, $D_m$ has $(m-1)/2$ inequivalent irreducible representations $\psi_j$ of degree $2.$ Therefore $K[A_n]$ has $(m-1)/2$ distinct indecomposable $K[A_n]$-modules $V_{j, even}$ for $1\leq j\leq (m-1)/2.$ Let $V_{triv, even}$ be one dimensional $K[A_n]$-module generated by  $v_{triv, even}.$ By dimension count, we get 
$$K[A_n]= V_{triv, even} \oplus \bigoplus_{j=1}^{(m-1)/2} V_{j, even}.$$
 
\item {\bfseries $m$= even}. Let $m = 2t$. In this case, $D_m$ has $t-1$ inequivalent irreducible representations $\psi_j$ of degree $2.$ Therefore $K[A_n]$ has $t-1$ distinct indecomposable $K[A_n]$-modules $V_{j, even}$ for $1\leq j\leq t-1$. Let $V_{triv, even}$ be one dimensional $K[A_n]$-module generated by  $v_{triv, even},$ and $V_{ref, even}$ be one dimensional $K[A_n]$-module generated by  $v_{ref, even}$ given in Lemma \ref{ref}. Again, by dimension count,
$$K[A_n]= V_{triv, even} \oplus  V_{ref, even} \oplus\bigoplus_{j=1}^{t-1} V_{j, even}.$$
 \end{enumerate}
One can compute the decomposition of $K[B_n]$ into indecomposable $K[B_n]$-modules along a similar line. Now, the theorem follows from the fact that $K[\mathcal{R}_n]=K[A_n]\oplus K[B_n].$ 
\end{proof}

To conclude the article, we provide a complete decomposition of the quandle ring $K[\mathcal{R}_8].$

\begin{example}
\normalfont
 Let $K=\mathbb{C}$ or $\mathbb{R}.$ We consider the dihedral quandle $\mathcal{R}_8$ of order $8.$  The decomposition of quandle ring $K[\mathcal{R}_8]$ is given below:
 $$K[\mathcal{R}_8]= V_{triv, even} \oplus V_{triv, odd} \oplus V_{ref, even} \oplus V_{ref, odd} \oplus V_{1, even} \oplus V_{1, odd}.$$ 
 
 Let $\{v_0, v_2, v_4, v_6 \}$ and $\{v_1, v_3, v_5, v_7 \}$ be bases for $K[A_8]$ and $K[B_8],$ respectively, obtained by identifying $\{v_0, v_2, v_4, v_6 \}$ and $\{v_1, v_3, v_5, v_7 \}$  with $A_8$ and $B_8,$ respectively. With these notations, the components of decomposition of $K[\mathcal{R}_8]$ are
 \begin{align*}
       V_{triv, even} &= \langle v_0+v_2+v_4+v_6 \rangle \\
       V_{triv, odd} &= \langle v_1+v_3+v_5+v_7 \rangle \\
       V_{ref, even} &= \langle v_0-v_2+v_4-v_6 \rangle \\
       V_{ref, odd} &= \langle v_1-v_3+v_5-v_7 \rangle \\
       V_{1, even} &= \langle v_0-v_4, v_2-v_6 \rangle \\
       V_{1, odd} &= \langle v_1-v_5, v_3-v_7 \rangle.
 \end{align*}

\end{example}

\bibliographystyle{amsalpha}
\bibliography{QuandleRing}

\providecommand{\bysame}{\leavevmode\hbox to3em{\hrulefill}\thinspace}
\providecommand{\MR}{\relax\ifhmode\unskip\space\fi MR }
\providecommand{\MRhref}[2]{%
  \href{http://www.ams.org/mathscinet-getitem?mr=#1}{#2}
}
\providecommand{\href}[2]{#2}
\begin{thebibliography}{EFT19}

\bibitem[BPS19]{BPS19}
Valeriy~G. Bardakov, Inder Bir~S. Passi, and Mahender Singh, \emph{Quandle
  rings}, J. Algebra Appl. \textbf{18} (2019), no.~8, 1950157, 23. \MR{3977818}

\bibitem[EFT19]{EFT}
Mohamed Elhamdadi, Neranga Fernando, and Boris Tsvelikhovskiy, \emph{Ring
  theoretic aspects of quandles}, J. Algebra \textbf{526} (2019), 166--187.
  \MR{3915329}

\bibitem[JL01]{JL}
Gordon James and Martin Liebeck, \emph{Representations and characters of
  groups}, second ed., Cambridge University Press, New York, 2001. \MR{1864147}

\end{thebibliography}

\end{document}